\newcommand{\nc}{\newcommand}
\nc{\dmo}{\DeclareMathOperator}
\nc{\nt}{\newtheorem}
\newtheorem{thm}{Theorem}
\nc{\margin}[1]{\marginpar{\tiny #1}}
\nc{\p}[1]{\smallskip\noindent{{\bf #1}}}
\dmo{\ra}{\rightarrow}
\title{Spectral radius of a star with one long arm}
\author{Hyunshik Shin}
\keywords{starlike trees, largest eigenvalue, Salem number}
\begin{document}
\begin{abstract}
A tree is said to be starlike if exactly one vertex has degree greater than two.
In this paper, we will study the spectral properties of $S(n,k \cdot 1)$, that is,
the starlike tree with $k$ branches of length 1 and one branch of length $n$.
The largest eigenvalue $\lambda_1$ of $S(n,k \cdot 1)$ satisfies
$\sqrt{k+1} \leq \lambda_1 < k/\sqrt{k-1}$. Moreover, the largest eigenvalue of $S(n,k \cdot 1)$
is equal to the largest eigenvalue of $S(k \cdot (n+1) )$, which is the starlike tree that has
$k$ branches of length $n-1$.
Using the spectral radii of $S(n,k \cdot 1)$ we can show that there is a sequence of
Salem numbers that converges to each integer $>1$.
\end{abstract}
\maketitle

\section{Introduction}
\label{section:introduction}

A tree which has exactly one vertex of degree greater than two is said to be \textit{starlike}.
Spectral properties of starlike trees are recently studied in \cite{LepovicGutman01, LepovicGutman02, BellSimic98}.

Let $P_n$ be the path with $n$ vertices. We denote $S(n_1, n_2, \cdots, n_k)$ a starlike
tree in which removing the central vertex $v_1$ leaves disjoint paths such that
$$S(n_1, n_2, \cdots, n_k) - v_1 = P_{n_1} \cup P_{n_2} \cup \cdots \cup P_{n_k}.$$
We say that the starlike tree $S(n_1, n_2, \cdots, n_k)$ has $k$ branches and the lengths
of branches are $n_1, n_2, \cdots, n_k$. It will be assumed that 
$n_1 \geq n_2 \geq \cdots \geq n_k$.

For a simple graph $G$ of order $n$, the \textit{spectrum} of $G$ is the set of eigenvalues
$\lambda_1 \geq \lambda_2 \geq \cdots \geq \lambda_n$ of its adjacency matrix $A$.
The characteristic polynomial $\det(\lambda I - A) $ of $A$ is called the characteristic
polynomial of $G$, denoted $\phi(G,\lambda)$ or simply $\phi(G)$. It is known that 
if $G$ is a graph and $v$ is any vertex, then
\[
\phi(G) = \lambda \phi(G-v) - \sum_{u} \phi(G-v-u) - 2 \sum_{C} \phi(G-C),
\]
where the first summation is over vertices $u$ adjacent to the vertex $v$ and
the second summation is over all cycles $C$ embracing the vertex $v$.
Applying to the starlike trees we obtain
\begin{equation}
\label{eqn:charpoly}
\phi(S(n_1, n_2, \cdots, n_k)) = \lambda \prod_{i=1}^{k} \phi(P_{n_i}) - \sum_{i=1}^k \left[ \phi(P_{n_i-1}) \prod_{j \in I_i} \phi(P_{n_j}) \right],
\end{equation}
where $I_i = \{ 1, 2, \cdots, k \} \setminus \{i\}$.

Using Equation (\ref{eqn:charpoly}), Lepovi\'c and Gutman \cite{LepovicGutman01} determine the bounds
for the largest eigenvalues of starlike trees.

\medskip
\begin{theorem}\cite[Theorem 2]{LepovicGutman01}
\label{thm:starlike_spectral}
If $\lambda_1$ is the largest eigenvalue of the starlike tree $S(n_1, n_2, \cdots, n_k)$,
then 
$$ \sqrt{k} \leq \lambda_1 < \frac{k}{\sqrt{k-1}}$$
for any positive integers $n_1 \geq n_2 \geq \cdots \geq n_k \geq 1$.
\end{theorem}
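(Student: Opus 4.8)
The plan is to treat the two inequalities with different tools. For the lower bound $\lambda_1\ge\sqrt{k}$ I would exhibit a subgraph of known spectral radius: the central vertex $v_1$ together with its $k$ neighbours (the endpoint of each branch nearest the centre) induces a copy of the star $K_{1,k}$, since no two of those neighbours are adjacent in a tree. The adjacency spectrum of $K_{1,k}$ is $\{\sqrt{k},0,\dots,0,-\sqrt{k}\}$, so $\lambda_1(K_{1,k})=\sqrt{k}$; because the adjacency matrix of an induced subgraph is a principal submatrix of $A$, Cauchy interlacing gives $\lambda_1\ge\lambda_1(K_{1,k})=\sqrt{k}$ (with equality precisely when every $n_i=1$, i.e.\ $S=K_{1,k}$). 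This is the easy direction.

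For the upper bound (where I assume $k\ge2$, the statement being vacuous when $k=1$) I would invoke the following standard consequence of Perron--Frobenius theory: if $A$ is the adjacency matrix of a connected graph, $x$ is a strictly positive vector, and $Ax\le sx$ coordinatewise with $Ax\ne sx$, then $\lambda_1<s$. Indeed, pairing with the positive left Perron eigenvector $y$ gives $\lambda_1\,y^{\top}x=y^{\top}(Ax)\le s\,y^{\top}x$, and the inequality is strict because $y>0$ while $sx-Ax$ is nonnegative and nonzero. The whole problem thus reduces to producing one positive vector $x$ whose local ratios $(Ax)_v/x_v$ never exceed $s=k/\sqrt{k-1}$ and fall strictly below it somewhere.

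I would try the geometric vector that assigns the value $1$ to $v_1$ and the value $\beta^{j}$ to the vertex lying at distance $j$ along any branch, with decay rate $\beta=1/\sqrt{k-1}$. The verification then splits into three cases. At the centre, $(Ax)_{v_1}=k\beta=k/\sqrt{k-1}=s$. At any internal branch vertex (distance $j$ with two neighbours; this includes the neighbour of $v_1$, whose smaller neighbour carries $\beta^{0}=1=x_{v_1}$) the ratio is $\beta^{-1}+\beta=\sqrt{k-1}+1/\sqrt{k-1}=k/\sqrt{k-1}=s$. At each of the $k$ tips the ratio is $\beta^{-1}=\sqrt{k-1}<s$. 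Hence $Ax\le sx$ with strict inequality at every tip, and the lemma yields $\lambda_1<k/\sqrt{k-1}$.

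The only real obstacle is locating the decay rate $\beta=1/\sqrt{k-1}$ and recognising the algebraic coincidence that it simultaneously saturates the degree-$k$ equation at the centre ($k\beta=s$) and the degree-$2$ path recurrence along the branches ($\beta+\beta^{-1}=s$); once $\beta$ is pinned down, each case is a one-line check. The other subtlety is purely formal: Collatz--Wielandt by itself only yields the non-strict bound $\lambda_1\le s$, so the irreducibility lemma in the second paragraph is exactly what is needed to promote it to the strict inequality claimed.
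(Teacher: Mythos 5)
Your proof is correct, and its upper-bound half takes a genuinely different route from the one the paper relies on. The paper does not reprove this theorem: it cites Lepovi\'c--Gutman \cite{LepovicGutman01}, whose method (the same machinery the paper reuses for Theorems \ref{thm:cospectral_radius} and \ref{thm:bounds}) runs through the characteristic polynomial recursion (\ref{eqn:charpoly}), the substitution $\lambda = 2\cos\theta = \sqrt{t} + 1/\sqrt{t}$, and root localization for the resulting reciprocal polynomial --- e.g.\ a zero of $t^{n+3}-(k-1)t^{n+2}+(k-1)t-1$ in the interval $(k-2,k-1)$ --- from which $\lambda_1 < k/\sqrt{k-1}$ is extracted. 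You instead prove the upper bound by a Collatz--Wielandt argument: the geometric test vector with decay rate $\beta = 1/\sqrt{k-1}$ satisfies $Ax \le sx$ with $s = k/\sqrt{k-1}$, with equality at the center and at internal branch vertices (the identity $k\beta = \beta + \beta^{-1} = s$ is the crux) and strict inequality at the tips, and irreducibility of $A$ (the tree is connected) upgrades $\lambda_1 \le s$ to the strict bound $\lambda_1 < s$; all three local checks are right, including the observation that $x_{v_1} = 1 = \beta^0$ makes the neighbor-of-center case uniform with the other internal vertices, and your handling of $k=1$ (vacuous) and $k=2$ ($\beta=1$, paths) is sound. Your lower bound --- interlacing against the induced star $K_{1,k}$ --- is the standard one, identical in spirit to the paper's own proof of the lower bound in Theorem \ref{thm:bounds}. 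The trade-off: your argument is elementary, self-contained, and uniform over all branch lengths, with no trigonometric substitution or case analysis of polynomial roots; the polynomial method yields more, namely the explicit equation satisfied by $t = \lambda_1^2/2 - 1 + \lambda_1\sqrt{\lambda_1^2/4-1}$, the location of its largest root, and the asymptotic sharpness of $k/\sqrt{k-1}$ for $S(k \cdot n)$ as $n \to \infty$, which is precisely the extra information the paper needs later for Theorem \ref{thm:cospectral_radius} and the Salem-number corollary, and which your test vector does not provide.
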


The lower bound $\sqrt{k}$ for $\lambda_1$ is realized by the star on $k+1$ vertices.
The upper bound can be achieved asymptotically by the starlike trees with 
$n_1 = n_2 = \cdots = n_k =n$. In such case, we will denote this starlike tree by $S(k \cdot n)$ instead
of $S(n_1, n_2, \cdots, n_k)$. 


\medskip
In this paper, we will discuss the spectral properties of a star with one long arm;
let $S(n, k \cdot 1)$ be the starlike tree with $k$ branches of length 1 and one branch
with length $n$. Note that $S(n, k \cdot 1)$ is a tree on $n+k+1$ vertices.

Two nonisomorphic graphs with the same spectrum are called \textit{cospectral}.
It is known that no two starlike trees are cospectral \cite{LepovicGutman02}.
However, the spectral radius does not distinguish starlike trees. We will show that
there are infinitely many pairs of nonisomorphic starlike trees that have the same spectral
radius.

\medskip
\begin{thm}
\label{thm:cospectral_radius}
For any positive integer $k \geq 3$, starlike trees $S(n, k \cdot 1)$ and $S(k \cdot (n+1))$ have the same largest eigenvalue.
\end{thm}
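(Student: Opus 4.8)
The plan is to compute both characteristic polynomials explicitly from Equation (\ref{eqn:charpoly}), isolate in each the factor that carries the spectral radius, and show that these two factors coincide as polynomials in $\lambda$.

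First I would apply Equation (\ref{eqn:charpoly}) to $S(n,k\cdot 1)$, whose branches are one path of length $n$ together with $k$ paths of length $1$. Using $\phi(P_1)=\lambda$ and $\phi(P_0)=1$, the product $\prod_i \phi(P_{n_i})$ becomes $\lambda^k\phi(P_n)$, the term coming from the long branch is $\lambda^k\phi(P_{n-1})$, and each of the $k$ short branches contributes $\lambda^{k-1}\phi(P_n)$. Collecting these and factoring out $\lambda^{k-1}$ I expect
\[
\phi(S(n,k\cdot 1)) = \lambda^{k-1}\bigl[(\lambda^2-k)\phi(P_n)-\lambda\,\phi(P_{n-1})\bigr].
\]
Doing the same for $S(k\cdot(n+1))$, which has $k$ equal branches of length $n+1$, and factoring out $\phi(P_{n+1})^{k-1}$ should give
\[
\phi(S(k\cdot(n+1))) = \phi(P_{n+1})^{k-1}\bigl[\lambda\,\phi(P_{n+1})-k\,\phi(P_n)\bigr].
\]
Write $f(\lambda)=(\lambda^2-k)\phi(P_n)-\lambda\,\phi(P_{n-1})$ and $g(\lambda)=\lambda\,\phi(P_{n+1})-k\,\phi(P_n)$ for the two bracketed cofactors.

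The key step is then the identity $f=g$. Substituting the path recurrence $\phi(P_{n+1})=\lambda\,\phi(P_n)-\phi(P_{n-1})$ into $g$ yields $g(\lambda)=\lambda^2\phi(P_n)-\lambda\,\phi(P_{n-1})-k\phi(P_n)=f(\lambda)$, so the two cofactors are literally the same polynomial. Hence the spectral radii of $S(n,k\cdot 1)$ and of $S(k\cdot(n+1))$ will both be roots of this common polynomial.

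The hard part will be the final step: arguing that the spectral radius of each tree is in fact the largest root of the common cofactor, rather than of one of the discarded factors $\lambda^{k-1}$ or $\phi(P_{n+1})^{k-1}$. For $S(n,k\cdot 1)$ this is immediate, since the discarded factor only contributes the root $0$ while the spectral radius is positive (indeed at least $\sqrt{k}$ by Theorem \ref{thm:starlike_spectral}). For $S(k\cdot(n+1))$ one must rule out that $\lambda_1$ is a root of $\phi(P_{n+1})$; a crude size estimate does not suffice here, because the largest root of $\phi(P_{n+1})$ tends to $2$ as $n\to\infty$. Instead I would invoke Perron--Frobenius together with the strict monotonicity of the spectral radius under passage to a proper connected subgraph: since $P_{n+1}$ is a proper subtree of the connected tree $S(k\cdot(n+1))$, we have $\lambda_1(S(k\cdot(n+1)))>\lambda_1(P_{n+1})$, so $\lambda_1$ is not a root of $\phi(P_{n+1})$ and must therefore be the largest root of $g=f$. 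Combining, the largest root of $f$ equals both spectral radii, which proves the claim. The hypothesis $k\ge 3$ enters only to guarantee that $S(k\cdot(n+1))$ is genuinely starlike, its central vertex having degree $k$.
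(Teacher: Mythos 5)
Your proposal is correct, and it takes a genuinely different route from the paper. The paper only computes $\phi(S(n,k\cdot 1))$ explicitly, exactly as you do, obtaining the cofactor $f(\lambda)=(\lambda^2-k)\phi(P_n)-\lambda\phi(P_{n-1})$; but it then substitutes $\lambda=2\cos\theta$, $t^{1/2}=e^{i\theta}$ to convert $f(\lambda)=0$ into the polynomial equation $t^{n+3}-(k-1)t^{n+2}+(k-1)t-1=0$, and concludes by observing that this is literally the equation which Lepovi\'c--Gutman had already shown to be satisfied by the $t$ corresponding to the largest eigenvalue of $S(k\cdot(n+1))$; that side of the identity is cited, not rederived. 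You instead compute $\phi(S(k\cdot(n+1)))=\phi(P_{n+1})^{k-1}\bigl[\lambda\phi(P_{n+1})-k\phi(P_n)\bigr]$ directly from Equation (\ref{eqn:charpoly}) and prove the key identity $g=f$ by one application of the three-term recurrence $\phi(P_{n+1})=\lambda\phi(P_n)-\phi(P_{n-1})$, staying entirely in the variable $\lambda$. Your route buys self-containedness and elementarity: no trigonometric or $t$-substitution, no external input beyond the recurrence, plus you explicitly dispose of the extraneous factors ($\lambda^{k-1}$ and $\phi(P_{n+1})^{k-1}$) via strict monotonicity of the spectral radius under proper connected subgraphs --- a point the paper never confronts head-on, since it implicitly relies on the largest eigenvalue corresponding to the largest root of the shared $t$-polynomial. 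What the paper's detour through $t$ buys is that the polynomial (\ref{eqn:salempoly}) is exactly what is needed later: the Salem-number corollary and the limit $\rho(Q_n(t))\to k-1$ both work with that $t$-polynomial, so the paper's proof produces it as a byproduct, whereas your argument would leave it still to be derived for Section 3.
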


\medskip
By Theorem \ref{thm:starlike_spectral} the largest eigenvalue 
$\lambda_1$ of $S(n, k \cdot 1)$ satisfies
$$ \sqrt{k+1} \leq \lambda_1 < \frac{k+1}{\sqrt{k}}.$$
As a consequence of Theorem \ref{thm:cospectral_radius}, we have a sharper upper bound for
the starlike tree $S(n, k \cdot 1)$.

\medskip
\begin{thm}
\label{thm:bounds}
If $\lambda_1$ is the largest eigenvalue of $S(n, k \cdot 1)$, then
$$ \sqrt{k+1} \leq \lambda_1 < \frac{k}{\sqrt{k-1}}$$
for any positive integers $n \geq 1$ and $k \geq 3$.
\end{thm}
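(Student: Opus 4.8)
The plan is to treat the two inequalities separately, since each follows from one of the results already in hand. For the lower bound I would simply regard $S(n,k\cdot 1)$ as a starlike tree with $k+1$ branches (the $k$ branches of length $1$ together with the single branch of length $n$) and apply Theorem \ref{thm:starlike_spectral} with the branch count $k+1$ in place of $k$. This immediately yields $\lambda_1 \geq \sqrt{k+1}$, with equality realized at $n=1$, where $S(1,k\cdot 1)$ is the star on $k+2$ vertices.

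The upper bound is where the new input is needed. A direct application of Theorem \ref{thm:starlike_spectral} to the $(k+1)$-branch tree gives only $\lambda_1 < (k+1)/\sqrt{k}$, and a short computation shows $(k+1)/\sqrt{k} > k/\sqrt{k-1}$ (equivalently $(k+1)^2(k-1) = k^3 + k^2 - k - 1 > k^3$ for $k \geq 2$), so this bound is strictly weaker than the one we want. To do better I would invoke Theorem \ref{thm:cospectral_radius}: since $k \geq 3$, the trees $S(n,k\cdot 1)$ and $S(k\cdot(n+1))$ share the same largest eigenvalue. But $S(k\cdot(n+1))$ is a starlike tree with exactly $k$ branches, so Theorem \ref{thm:starlike_spectral} applied to it (now with the branch count $k$) gives $\lambda_1 < k/\sqrt{k-1}$. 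Transporting this back along the equality of largest eigenvalues finishes the upper bound.

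I do not expect a serious obstacle to remain, since essentially all the difficulty has been pushed into Theorem \ref{thm:cospectral_radius} and the present statement is in effect its corollary. The points to verify are just bookkeeping: that $S(n,k\cdot 1)$ genuinely has $k+1$ branches, so the lower bound uses the correct count; that $S(k\cdot(n+1))$ genuinely has $k$ branches, so the sharper upper bound uses the correct count; and that the hypothesis $k \geq 3$ is inherited from Theorem \ref{thm:cospectral_radius} (Theorem \ref{thm:starlike_spectral} itself only needs $k \geq 2$ for $k/\sqrt{k-1}$ to be defined).
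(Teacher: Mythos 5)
Your proposal is correct and is essentially the paper's own argument: the paper likewise obtains the upper bound from Theorem \ref{thm:cospectral_radius} together with Lepovi\'c--Gutman's work, only phrasing it via the location of the largest root of $t^{n+3}-(k-1)t^{n+2}+(k-1)t-1$ in the interval $(k-2,k-1)$, which is precisely the fact underlying the upper bound of Theorem \ref{thm:starlike_spectral} for the $k$-branch tree $S(k \cdot (n+1))$. The only cosmetic difference is in the lower bound, where the paper uses the star on $k+2$ vertices as a subgraph together with the interlacing theorem rather than quoting Theorem \ref{thm:starlike_spectral} with branch count $k+1$; both give $\sqrt{k+1} \leq \lambda_1$.
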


\medskip
There are two special algebraic integers related to the largest eigenvalue
of starlike trees.
A \textit{Salem number} is an algebraic integer $\alpha > 1$,
all of whose other conjugates have modulus $\leq 1$, with at least one
conjugate of modulus 1.
A \textit{Pisot number} is an algebraic integer $\beta > 1$,
all of whose other conjugates have modulus $<1$.
With Theorem \ref{thm:bounds} and the work of
McKee--Rowlinson--Smyth \cite{McKeeRowlinsonSmyth99}, we have the 
following corollary.

\medskip
\newtheorem*{thm:salem}{Corollary \ref{thm:salem_number}}
\begin{thm:salem}
For $n \geq 2$ and $k\geq 3$ let $\lambda_1$ be the largest eigenvalue of the starlike tree $S(n, k \cdot 1)$.
Then the number $t > 1 $ defined by 
\begin{equation}
\label{eqn:stretchfactor}
\sqrt{t} + \frac{1}{\sqrt{t}} = \lambda_1,
\end{equation}
is a Salem number.
\end{thm:salem}

Using the spectral properties of the starlike tree $S(n, k \cdot 1)$, the author studied
the stretch factors of pseudo-Anosov mapping classes of closed orientable surfaces.
In particular, the number $t$ defined by (\ref{eqn:stretchfactor}) is the stretch
factor of a pseudo-Anosov mapping class from Thurston's construction whose configuration
graph is $S(n, k \cdot 1)$. For more about this topic, see \cite{Shin15}.

\medskip
\section{Bounds for the largest eigenvalue}

In this section we will prove main theorems of this paper. 
Lepovi\'c and Gutman \cite{LepovicGutman01} show that the number $t >1$,
defined by $\sqrt{t} + 1/\sqrt{t} = \lambda_1$, where $\lambda_1$ is the largest
eigenvalue of $S(k \cdot (n+1))$, is the root of the polynomial equation
\begin{equation}
\label{eqn:salempoly}
t^{n+3} - (k-1) t^{n+2} + (k-1)t -1 =0.
\end{equation}

To prove Theorem \ref{thm:cospectral_radius} we will show that when $\lambda_1$ is the
largest eigenvalue of $S(n, k \cdot 1)$, the number $t$ given by 
$\sqrt{t} + 1/\sqrt{t} = \lambda_1$, is again the root of the polynomial 
(\ref{eqn:salempoly}).

\medskip
\begin{proof}[Proof of Theorem \ref{thm:cospectral_radius}]
Equation (\ref{eqn:charpoly}) reduces to
{\setlength\arraycolsep{2pt}
\begin{eqnarray*}
\phi\left( S(n, k \cdot 1) \right) & = & \lambda \phi(P_n) \phi(P_1)^k - \left( k \, \phi(P_n) \phi(P_1)^{k-1} + \phi(P_{n-1}) \phi(P_1)^k \right)\\
&=& \lambda^{k+1} \phi(P_n) - k \lambda^{k-1} \phi(P_n) - \lambda^k \phi(P_{n-1})\\
&=& \lambda^{k-1} \left( \lambda^2 \phi(P_n) - k \phi(P_n) - \lambda \phi(P_{n-1}) \right).
\end{eqnarray*}
}
Therefore the largest eigenvalue of $S(n, k \cdot 1)$ is the root of 
\begin{equation}
\label{eqn:proof_A1}
\lambda^2 \phi(P_n) - k \phi(P_n) - \lambda \phi(P_{n-1})=0.
\end{equation}
By substituting $\lambda = 2 \cos \theta$, we get $\phi(P_n) = \sin(n+1)\theta / \sin \theta$ (see \cite[p.73]{CvetkovicDoobSachs95})
and Equation (\ref{eqn:proof_A1}) becomes
\begin{equation}
\label{eqn:proof_A2}
(4\cos^2 \theta - k ) \frac{\sin(n+1)\theta}{\sin \theta} - 2 \cos \theta \frac{\sin n\theta}{\sin \theta} = 0.
\end{equation}
By setting $t^{1/2} = e^{i\theta}$, we have 
$$ \lambda = 2 \cos \theta = t^{1/2} + t^{-1/2} $$
and
$$ \sin n\theta = \frac{t^{n/2} - t^{-n/2}}{2i}.$$
By substituting and simplifying, Equation (\ref{eqn:proof_A2}) becomes
\begin{equation}
\label{eqn:proof_A3}
t^{n+3} - (k-1) t^{n+2} + (k-1)t -1	 =0.
\end{equation}
If $t^{*}$ is a root of Equation (\ref{eqn:proof_A3}), then the number $\lambda^{*}$,
defined by $\lambda^{*} = \sqrt{t^{*}} + 1/\sqrt{t^{*}}$, is a root of Equation
 (\ref{eqn:proof_A1}).
Since Equation (\ref{eqn:proof_A3}) is identical with Equation (\ref{eqn:salempoly})
we can conclude that the largest eigenvalue of $S(n, k \cdot 1)$ is equal to
the largest eigenvalue of $S(k \cdot (n+1))$.
\end{proof}

Theorem \ref{thm:bounds} follows directly from Theorem 
\ref{thm:cospectral_radius} and the work of Lepovi\'c and Gutman.

\begin{proof}[Proof of Theorem \ref{thm:bounds}]
A star with $k+2$ vertices is a subgraph of $S(n, k \cdot 1)$ and its
largest eigenvalue is $\sqrt{k+1}$.
By the interlacing theorem we have $\sqrt{k+1} \leq \lambda_1$.

On the other hand, Lepovi\'c and Gutman also show that Equation (\ref{eqn:proof_A3})
has a zero in the interval $(k-2,k-1)$ and it follows that $\lambda_1 < k/\sqrt{k-1}$
(See \cite[prrof of Theorem 2]{LepovicGutman01}).
Therefore we have
$$ \sqrt{k+1} \leq \lambda_1 < \frac{k}{\sqrt{k-1}}.$$
\end{proof}

\noindent
\textbf{Remark.}
In the paper of Lepovi\'c and Gutman, they study the properties of 
the polynomial
$$t^{n+2} - (k-1) t^{n+1} + (k-1)t -1$$
and one can easily see that all results are also true for Equation (\ref{eqn:proof_A3}).


\medskip
\section{Algebraic integers associated with starlike trees}

It is known that a starlike tree has at most one eigenvalue $> 2$.
We say that a starlike tree is \textit{hyperbolic} if it has exactly one
eigenvalue greater than $2$. It happens that all starlike trees are hyperbolic except
$S(n-3,1,1)$, for $n\geq 4$, $S(5,2,1), S(4,2,1), S(3,3,1), S(3,2,1), S(2,2,2), S(2,2,1),$ and $S(1,1,1,1)$ \cite[Theroem 1]{LepovicGutman01}.
Hence for $n\geq 2$ and $k\geq 3$, $S(n, k \cdot 1)$ is hyperbolic.

Let $\lambda_1$ be the largest eigenvalue of $S(n_1, n_2, \cdots, n_k)$.
If the starlike tree is hyperbolic, then the number
$t>1$, defined by $\sqrt{t} + 1/{\sqrt{t}} = \lambda_1$, is associated with
the dynamical complexity of an automorphism of an orientable surface
(for more about this topic, see \cite{Leininger04} or \cite{Shin15}).
In particular, $t$ is a special algebraic integer, characterized by the
following theorem.

\begin{theorem}\cite[Corollary 9]{McKeeRowlinsonSmyth99}
\label{thm:McKeeRowlinsonSmyth}
Let $S$ be a starlike tree whose largest eigenvalue $\lambda_1$ is not an
integer, and suppose that $S$ is hyperbolic. Then $t>1$, defined
by $\sqrt{t} + 1/{\sqrt{t}} = \lambda_1$, is a Salem number. If $\lambda_1$
is an integer then $t$ is a quadratic Pisot number.
\end{theorem}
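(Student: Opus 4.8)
The plan is to use the substitution $\lambda=\sqrt t+1/\sqrt t$ (equivalently $\lambda=z+z^{-1}$ with $z=\sqrt t$) to convert the integer characteristic polynomial $\phi(S,\lambda)$ into a self-reciprocal integer polynomial in $t$, and then to read off the moduli of the conjugates of $t$ directly from the locations of the eigenvalues of $S$. The key identity is $\lambda^2-2=t+1/t$, so the map $\lambda\mapsto t$ sends $(2,\infty)$ to the real ray $(1,\infty)$ and sends $[-2,2]$ onto the unit circle $|t|=1$; hyperbolicity is exactly what guarantees that only $\lambda_1$ lands outside the closed unit disk.

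First I would record the spectral input. Since $S$ is a tree it is bipartite, so its spectrum is symmetric about $0$; combined with hyperbolicity this gives exactly one eigenvalue $>2$ (namely $\lambda_1$), exactly one $<-2$ (namely $-\lambda_1$), and all remaining eigenvalues in $[-2,2]$. Bipartiteness also gives $\phi(S,\lambda)=\lambda^{\varepsilon}g(\lambda^2)$ for a monic integer polynomial $g$ and $\varepsilon\in\{0,1\}$. Substituting $\lambda^2=t+2+1/t$ into $g$ and multiplying by $t^{\deg g}$ produces a monic reciprocal integer polynomial $R(t)=t^{\deg g}\,g(t+2+1/t)$ having $t$ as a root; over $\mathbb{C}$ it factors as a product of quadratics $t^2-(\rho-2)t+1$, one for each root $\rho$ of $g$.

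Next I would localize the roots of $R$. The factor coming from $\rho=\lambda_1^2>4$ is $t^2-(\lambda_1^2-2)t+1$, with real roots $t>1$ and $1/t\in(0,1)$; every factor coming from an eigenvalue in $[-2,2]$ has $\rho-2\in[-2,2]$, hence a conjugate pair of roots on $|t|=1$. Thus $R$ has exactly one root outside the closed unit disk, its reciprocal inside, and all others on the circle. Since $t$ is a root of the monic integer polynomial $R$, it is an algebraic integer whose minimal polynomial divides $R$, so every conjugate of $t$ other than $t$ itself has modulus $\le 1$. By the classical dichotomy for such integers, $t$ is then a Pisot number (if all other conjugates lie strictly inside the disk) or a Salem number (if some conjugate lies on the unit circle). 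The integer case is immediate in this language: if $\lambda_1\in\mathbb{Z}$ then $\lambda_1^2-2\in\mathbb{Z}$, the quadratic $t^2-(\lambda_1^2-2)t+1$ is already a polynomial over $\mathbb{Q}$ with irrational root $t$, hence is the minimal polynomial, and its other root $1/t\in(0,1)$ exhibits $t$ as a quadratic Pisot number.

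The main obstacle is the non-integer case. The clean statement is that $t$ is quadratic (hence Pisot) precisely when $\lambda_1^2-2=t+1/t$ is rational, and otherwise the minimal polynomial of $t$ is a proper reciprocal factor of $R$ of degree $\ge 4$ carrying a unit-circle root, i.e.\ a Salem polynomial. The delicate point is the passage from ``$\lambda_1$ not an integer'' to ``$t$ not quadratic'': a priori $\lambda_1$ could equal $\sqrt d$ for a non-square integer $d$, in which case $\lambda_1\notin\mathbb{Z}$ yet $\lambda_1^2-2\in\mathbb{Z}$ and $t$ is quadratic Pisot. Excluding this, and separately verifying in the Salem case that the minimal polynomial is genuinely reciprocal rather than merely a divisor of the reciprocal $R$, is where one must use the finer arithmetic of the starlike tree—via the explicit polynomial already obtained in the proof of Theorem \ref{thm:cospectral_radius} together with the simplicity of the Perron eigenvalue $\lambda_1$—to force $\lambda_1^2\notin\mathbb{Q}$ whenever $\lambda_1\notin\mathbb{Z}$.
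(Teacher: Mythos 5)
The paper contains no proof of Theorem \ref{thm:McKeeRowlinsonSmyth} to compare against---it is quoted from McKee--Rowlinson--Smyth and used as a black box---so your attempt has to be judged against the statement itself and the way the paper applies it. Your core construction is sound and is essentially the standard route: since $S$ is a tree it is bipartite, hyperbolicity places exactly the pair $\pm\lambda_1$ outside $[-2,2]$, and writing $\phi(S,\lambda)=\lambda^{\varepsilon}g(\lambda^{2})$ and $R(t)=t^{\deg g}\,g(t+2+1/t)$ produces a monic reciprocal integer polynomial whose roots are $t$, $1/t$, and conjugate pairs on the unit circle. Hence every conjugate of $t$ other than $t$ itself has modulus at most $1$, and $t$ is Salem precisely when its minimal polynomial picks up a root on the circle, which happens precisely when $t$ is not quadratic, i.e.\ precisely when $\lambda_1^{2}\notin\mathbb{Z}$.

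The genuine gap is the step you yourself flag as delicate, and it cannot be filled: there is no way to derive $\lambda_1^{2}\notin\mathbb{Q}$ from $\lambda_1\notin\mathbb{Z}$, because that implication is false for starlike trees. The star $S(k\cdot 1)$ with $k\geq 5$ and $k$ not a perfect square has spectrum $\pm\sqrt{k}$ together with $0$ of multiplicity $k-1$; it is therefore hyperbolic with $\lambda_1=\sqrt{k}\notin\mathbb{Z}$, yet $t+1/t=\lambda_1^{2}-2=k-2\in\mathbb{Z}$, so $t$ is a quadratic Pisot number and not a Salem number. In other words, the statement as transcribed is itself imprecise---the dichotomy must be governed by whether $\lambda_1^{2}$ is an integer, not by whether $\lambda_1$ is---so any proof of it as written is doomed. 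Your plan to force the missing implication ``via the explicit polynomial from the proof of Theorem \ref{thm:cospectral_radius}'' also misreads the scope: the statement concerns arbitrary starlike trees, not only $S(n,k\cdot 1)$, so that polynomial is unavailable. What your argument does prove is the corrected dichotomy: $t$ is Salem if and only if $\lambda_1^{2}\notin\mathbb{Z}$, and quadratic Pisot otherwise. That is exactly what the paper needs, since the proof of Corollary \ref{thm:salem_number} shows $k+1<\lambda_1^{2}<k+1+\frac{1}{k-1}$, hence $\lambda_1^{2}\notin\mathbb{Z}$, and the Salem conclusion then follows from your root-localization argument.
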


\medskip
Now we have the following result.
\begin{corollary}
\label{thm:salem_number}
For $n \geq 2$ and $k\geq 3$ let $\lambda_1$ be the largest eigenvalue of the starlike tree $S(n, k \cdot 1)$.
Then the number $t > 1 $ defined by 
$$ \sqrt{t} + \frac{1}{\sqrt{t}} = \lambda_1,$$
is a Salem number.
\end{corollary}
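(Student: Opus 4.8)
The plan is to reduce everything to Theorem \ref{thm:McKeeRowlinsonSmyth}. We have already observed that for $n \geq 2$ and $k \geq 3$ the tree $S(n, k \cdot 1)$ is hyperbolic, so that theorem applies directly: the number $t$ is a Salem number when $\lambda_1$ is not an integer, and merely a quadratic Pisot number when $\lambda_1 \in \mathbb{Z}$. Thus the whole content of the corollary is the single claim that $\lambda_1$ is \emph{not} an integer, and that is the only point I would need to verify.

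To exclude an integer value of $\lambda_1$, I would first upgrade the lower bound of Theorem \ref{thm:bounds} to a strict one. For $n \geq 2$ the star on $k+2$ vertices, already used in the proof of Theorem \ref{thm:bounds}, is a \emph{proper} subgraph of the connected tree $S(n, k \cdot 1)$; since the spectral radius of a proper subgraph of a connected graph is strictly smaller (a standard consequence of the Perron--Frobenius theorem), we obtain $\lambda_1 > \sqrt{k+1}$ rather than just $\geq$. Here the hypothesis $n \geq 2$ is essential: when $n = 1$ the tree is exactly the star and $\lambda_1 = \sqrt{k+1}$ can itself be an integer. Combining with the upper bound of Theorem \ref{thm:bounds} and rewriting it gives
\[
\sqrt{k+1} < \lambda_1 < \frac{k}{\sqrt{k-1}} = \sqrt{k-1} + \frac{1}{\sqrt{k-1}},
\]
so it suffices to show that the open interval $(\sqrt{k+1},\, k/\sqrt{k-1})$ contains no integer.

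Finally, I would argue by contradiction. Suppose $N$ were an integer with $\sqrt{k+1} < N < k/\sqrt{k-1}$. The left inequality gives $N^2 > k+1$, and since $N^2$ is an integer this forces $N^2 \geq k+2$. The right inequality gives $N^2(k-1) < k^2$. Chaining these together,
\[
(k+2)(k-1) \leq N^2(k-1) < k^2,
\]
which simplifies to $k-2 < 0$, contradicting $k \geq 3$. Hence no such $N$ exists, $\lambda_1$ is not an integer, and Theorem \ref{thm:McKeeRowlinsonSmyth} yields that $t$ is a Salem number.

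The only genuine obstacle here is the non-integrality of $\lambda_1$; the hyperbolicity and the Salem-versus-Pisot dichotomy are handed to us by the cited results. Within that step, the delicate part is recognizing that the \emph{strict} lower bound (and hence the hypothesis $n \geq 2$) is precisely what is needed --- with only $\lambda_1 \geq \sqrt{k+1}$ one could not rule out the boundary case $\lambda_1 = \sqrt{k+1}$ when $k+1$ is a perfect square --- after which the interval-counting argument is a short computation.
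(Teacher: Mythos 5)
Your proposal is correct and follows essentially the same route as the paper: both reduce the corollary to Theorem \ref{thm:McKeeRowlinsonSmyth} via hyperbolicity and then use the bounds of Theorem \ref{thm:bounds} together with the identity $k^2/(k-1) = k+1+\tfrac{1}{k-1}$ to rule out an integer value of $\lambda_1$. If anything, your write-up is more careful than the paper's, which silently uses the strict inequality $k+1 < \lambda_1^2$ even though Theorem \ref{thm:bounds} only states $\sqrt{k+1} \leq \lambda_1$; your proper-subgraph (Perron--Frobenius) argument is exactly the justification needed, and it correctly isolates where the hypothesis $n \geq 2$ enters.
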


\begin{proof}
By Theorem \ref{thm:bounds} we have
$$ k+1 < \lambda_1^2 < \frac{k^2}{k-1} = k+1 +\frac{1}{k-1}$$
and hence $\lambda_1$ is not an integer. 
By Theorem \ref{thm:McKeeRowlinsonSmyth}, $t$ is a Salem number.
\end{proof}

\medskip
Let $Q_n(t)$ be the polynomial in Equation (\ref{eqn:proof_A3}) and 
let $\rho \left( Q_n(t) \right)$ be the largest real root of $Q_n(t)$.
Let $m$ be any fixed 
positive integer. It is shown that for sufficiently large $n$, 
$Q_n(t)$ has a root in the interval $(k-1-\frac{1}{10^m}, k-1)$
(see the proof of Corollary 2.1. in \cite{LepovicGutman01}).
This implies that
$$ \lim_{n \rightarrow \infty} \rho \left( Q_n(t) \right) = k-1.$$
Since the largest root of $Q_n(t)$ is a Salem number for each $n$, 
there is a sequence of Salem numbers that converges to each integer greater than 1.

\medskip
\bibliographystyle{alpha}
\bibliography{starlikelongarm}
\end{document}